\newtheorem{thm}{Theorem}[section]
\newtheorem{lem}[thm]{Lemma}
\newtheorem{prop}[thm]{Proposition}
\newtheorem{cor}[thm]{Corollary}
\newtheorem{defn}[thm]{Definition}
\newtheorem*{theoremb}{Theorem}
\begin{document}

\title[The Generic Initial System of a Complete Intersection]{The Limiting Polytope of the Generic Initial System of a Complete Intersection}
\author{Sarah Mayes}

\maketitle
\vspace*{-2em} 

\begin{abstract}

Consider a complete intersection $I$ of type $(d_1, \dots, d_r)$ in a polynomial ring over a field of characteristic 0.  We study the graded system of ideals $\{ \text{gin}(I^n) \}_n$ obtained by taking the reverse lexicographic generic initial ideals of the powers of $I$ and describe its asymptotic behavior.  This behavior is nicely captured by the \textit{limiting polytope} which is shown to depend only on the type of the complete intersection.
\end{abstract}
		\let\thefootnote\relax\footnotetext{Partially supported by the National Sciences and Engineering Research Council of Canada.}
\section{Introduction}

The asymptotic behavior of algebraic objects has been a fruitful research trend of the past twenty years, motivated by the philosophy that there is often a uniformity achieved in the limit that is hidden when studying individual objects.  Significant work along these lines includes: Huneke's uniform Artin-Rees lemma \cite{Huneke92}; Siu's work on the Fujita conjecture \cite{Siu01}; Ein, Lazarsfeld, and Smith's introduction of graded systems and asymptotic multiplier ideals \cite{ELS01}; and, most recently, Eisenbud and Schreyer's proof of the Boij-S\"{o}derberg conjectures \cite{ES09}.  We study the asymptotic behavior of generic initial ideals - a rich research topic in their own right - using techniques similar to those in \cite{ELS01}.  

Consider a homogeneous ideal $I$ in a polynomial ring $R=K[x_1, \dots, x_m]$ with the standard grading and some fixed term order.  The generic initial ideal of $I$, $\text{gin}(I)$, is a coordinate-independent version of the initial ideal; as a monomial ideal, there is a Newton polytope $P_{\text{gin}(I)}$ of $\mathbb{R}^m$ associated to it.  In this paper we introduce the \textit{generic initial system} $\{ \text{gin}(I^n) \}_n$ of $I$ and define the \textit{limiting polytope} of this system to be the the limit of the polytopes $\frac{1}{n}P_{\text{gin}(I^n)}$.  We study the case where $I$ is a complete intersection with minimal generators of degrees $d_1, d_2,  \dots, d_r$.  Our main result is the following theorem describing the limiting polytope of the reverse lexicographic generic initial system of such an ideal.

\begin{thm} 
\label{thm:limitingpolytope}
Let $I$ be a complete intersection of type $(d_1, \dots , d_r)$ in \linebreak[4] $K[x_1, \dots, x_r]$ where $d_1 \leq \cdots \leq d_r$  and $K$ is a field of characteristic 0.  The limiting polytope of the reverse lexicographic generic initial system $\{\text{gin}(I^n)\}_n$ is the closure of the complement in $\mathbb{R}_{\geq 0}^r$ of the $r$-simplex  with vertices at the points $$(0, 0, \dots, 0), (d_1, 0, \dots, 0), (0, d_2, 0, \dots, 0), \dots, (0, \dots, 0, d_r).$$
\end{thm}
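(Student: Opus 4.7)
My plan is to establish the two directions needed: the scaled Newton polytopes $\tfrac{1}{n}P_{\text{gin}(I^n)}$ lie asymptotically inside the closed half-space complementary to the open simplex $\Delta^\circ$, and they fill that half-space in the limit. The first is a containment statement on exponents; the second will follow from a volume computation.

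\emph{Volume.} The standard monomials of $\text{gin}(I^n)$ give a $K$-basis of $R/I^n$, so their count is $\text{length}(R/I^n)$. Since $I$ is an $\mathfrak{m}$-primary complete intersection of multiplicity $e(I) = d_1\cdots d_r$, the Hilbert--Samuel polynomial yields
$$\text{length}(R/I^n) \;=\; \frac{d_1\cdots d_r}{r!}\,n^r + O(n^{r-1}),$$
which matches $\text{vol}(\Delta)\cdot n^r$ to leading order since $\text{vol}(\Delta) = d_1\cdots d_r/r!$. After scaling by $1/n$, the density of standard-monomial lattice points recovers $\text{vol}(\Delta)$ in the limit, so whatever the limiting polytope is, its complement in $\mathbb{R}^r_{\geq 0}$ has the same volume as $\Delta$.

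\emph{Shape.} The key claim is that every monomial $x^a \in \text{gin}(I^n)$ satisfies $\sum a_i/d_i \geq n$, i.e., the Newton polytope lies on the far side of the non-coordinate face of $n\Delta$. A natural first move is to reduce to the monomial complete intersection $J = (x_1^{d_1},\ldots,x_r^{d_r})$, for which $J^n$ is generated by monomials of weighted degree exactly $n$ under $w(x_i)=1/d_i$, so every monomial of $J^n$ has weight $\geq n$. The task is then to transfer this weight bound to $\text{gin}(J^n)$, despite the fact that the weighting $w$ is not preserved by the generic coordinate change defining the gin. My intended route is induction on $r$ via the revlex hyperplane-section identity relating $\text{gin}(I^n)$ modulo $x_r$ to the gin of the image of $I^n$ modulo a generic linear form; this reduces the $\{x_r=0\}$ slice to a lower-dimensional ideal analogous to a CI of type $(d_1,\ldots,d_{r-1})$ and brings in the $(r-1)$-simplex by the inductive hypothesis. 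The $x_r$-direction is then pinned down by the regularity estimate $\text{reg}(I^n)\sim n d_r$, while the Borel-fixedness of $\text{gin}(I^n)$ interpolates between the slice data and the $x_r$-axis data to yield the simplex shape in the limit.

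\emph{Main obstacle.} The hardest step is precisely this shape bound. Proving the weight inequality for $\text{gin}(I^n)$ rather than for $I^n$ itself requires tracking how the non-standard weighting $w$ behaves under both the generic coordinate change and the passage to initial ideals, which together form a subtle interplay. The cleanest handle is likely the hyperplane-section induction combined with Borel-fixedness, but one must quantify the error terms uniformly in $n$ so that the slices and the $x_r$-direction data together force the nontrivial facet of the limiting polytope to lie exactly on the hyperplane $\sum v_i/d_i = 1$—at which point the matching volume computed above upgrades the containment to equality, giving the claimed limiting polytope.
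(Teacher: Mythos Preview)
Your volume computation is correct and matches the paper's: both use the Hilbert--Samuel asymptotics $\text{length}(R/I^n)\sim \frac{d_1\cdots d_r}{r!}\,n^r$ together with Musta\c{t}\u{a}'s identification of algebraic and geometric volume to pin $\mathrm{vol}(Q)=d_1\cdots d_r/r!$.

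The gap is in your shape bound, and it is structural rather than a matter of error terms. You aim to show $\tfrac{1}{n}P_{\text{gin}(I^n)}\subseteq\{\sum v_i/d_i\ge 1\}$, i.e.\ that every monomial of $\text{gin}(I^n)$ has $w$-weight at least $n$. Your proposed induction reduces the slice $\text{gin}(I^n)|_{x_r=0}$ to $\text{gin}((I_h)^n)$ for a generic linear form $h$, but $I_h$ is an $\mathfrak m$-primary ideal in $r-1$ variables generated by $r$ forms---it is \emph{not} a complete intersection of type $(d_1,\ldots,d_{r-1})$, so the inductive hypothesis does not apply to it. One does have $I_h\supseteq (\bar f_1,\ldots,\bar f_{r-1})$, the latter a genuine CI of that type, but this containment points the wrong way: it gives $\text{gin}((I_h)^n)\supseteq\text{gin}((\bar f_1,\ldots,\bar f_{r-1})^n)$, and a weight lower bound on the \emph{smaller} ideal says nothing about monomials in the larger one. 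The same directional problem obstructs your monomial-CI reduction. So the containment you need remains unproved, and the Borel/regularity data you invoke do not by themselves force the non-coordinate facet onto the hyperplane $\sum v_i/d_i=1$.

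The paper sidesteps this entirely by proving the \emph{opposite} containment and letting convexity carry the rest. It controls only the axis intercepts, showing $\lim_n p_i(n)/n\le d_i$ where $x_i^{p_i(n)}$ is the minimal pure power in $\text{gin}(I^n)$; since the limiting polytope $P$ is convex and closed under adding $\mathbb R_{\ge 0}^r$, the points $d_ie_i\in P$ already force $P\supseteq\{\sum v_i/d_i\ge 1\}$, hence $Q\subseteq\Delta$ and $\mathrm{vol}(Q)\le d_1\cdots d_r/r!$. The bound on the intercepts is obtained by induction via the sub-ideal $J=(f_1,\ldots,f_{r-1})\subseteq I$: here $\text{gin}(J^n)\subseteq\text{gin}(I^n)$ gives $p_i(n)\le p_i'(n)$ for $i<r$, and Proposition~3.2 lets one regard $J$ as a CI in $r-1$ variables so that the inductive hypothesis applies to the $p_i'$. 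The last intercept $p_r(n)$ is computed exactly from the top Betti number of $I^n$ via Eliahou--Kervaire and Guardo's resolution. The volume equality then upgrades $Q\subseteq\Delta$ to $Q=\Delta$. The move you are missing, then, is that one only needs to locate the $r$ pure-power generators on $\partial P$, not bound every generator's weight; convexity fills in the facet for free, and in that direction the sub-CI containment goes the right way.
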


This result easily extends to the case where $I$ is an $r$-complete intersection in $K[x_1, \dots, x_m]$  and shows that the asymptotic behavior of the generic initial system of a complete intersection is as nice as one could hope. The simplicity of the limiting polytope contrasts sharply with the structure of the individual ideals $\text{gin}(I^n)$ .

First, it is not clear that the ideals $\text{gin}(I^n)$ depend only on the degrees of the generators of $I$, even when $n=1$.  Consider the case where $R= k[x_1, x_2, x_3]$ and $I$ is a triple complete intersection with minimal generators of degrees $d_1, d_2$ and $d_3$.  Cimpoea\c{s} \cite{Cimpoeas06} has computed the generators of $\text{gin}(I)$ under the additional condition that $I$ be strongly Lefschetz; in this case $\text{gin}(I)$ is almost reverse lexicographic and depends only on $d_1, d_2,$ and $d_3$.  Cho and Park \cite{ChoPark07} prove that an ideal $J$ in $R=k[x_1, x_2, x_3]$ is strongly Lefschetz if and only if $\text{gin}(J)$ is almost reverse lexicographic.  Therefore, in this case, showing that $\text{gin}(I)$ depends only on the degrees of the generators of $I$ is equivalent to showing that all triple complete intersections are strongly Lefschetz; the latter question has been well-studied and seems difficult to prove in general (see \cite{MiglioreNagel11}).

Second, the generators of the ideals $\text{gin}(I^n)$ are complicated in even the simplest cases and they depend heavily on the relative magnitudes of $d_1, \dots, d_r$.  For explicit descriptions of the generators of $\text{gin}(I^n)$ see \cite{Mayes12} for the case where $r=2$ and \cite{Cimpoeas06} for the case where $n=1, r=3$, and $I$ is strongly Lefschetz.

The generic initial system of a complete intersection, then, follows the philosophy guiding the study of asymptotic objects: the ideals $\text{gin}(I^n)$ are complex but uniformity is gained in the limit.  We anticipate that this will hold for other generic initial systems; further research is required to determine conditions on the limiting polytopes of such systems.

\section*{Acknowledgements}

I would like to thank Karen Smith for introducing this problem to me, for guiding my work, and for her continued encouragement.  I would also like to thank Daniel Erman for suggestions on an earlier draft.  Finally,  I would like to thank Mel Hochster, Robert Lazarsfeld, Irena Swanson, and Michael Von Korff for useful discussions.  Calculations leading to the statement of the main theorem were performed using the computer software Macaulay2 \cite{Macaulay2}.
\section{Preliminaries}

In this section we will introduce some notation, definitions, and preliminary results related to generic initial ideals and systems of ideals. Throughout $R=K[x_1, \dots, x_m]$ is a polynomial ring over a field $K$ of characteristic 0 with the standard grading and some fixed term order $>$ with $x_1 > \cdots >x_m$.  A monomial $x_1^{j_1}x_2^{j_2} \cdots x_m^{j_m}$ of $R$ may also be written in multi-index notation as $x^J$ where $J=(j_1, \dots, j_m)$.  

\subsection{Generic Initial Ideals}

An element $g = (g_{ij}) \in \text{GL}_m(K)$ acts on $R$ and sends any homogeneous element $f(x_1, \dots, x_n)$ to the homogeneous element 
$$f(g(x_1), \dots, g(x_n))$$ 
where $g(x_i) = \sum_{j=1}^m g_{ij}x_j$.  If $g(I)=I$ for every upper triangular matrix $g$ then we say that $I$ is \textit{Borel-fixed}.  Borel-fixed ideals are \textit{strongly stable} when $K$ is of characteristic 0; that is, for every monomial $m$ in the ideal such that $x_i$ divides $m$, the monomials $\frac{x_jm}{x_i}$ for all $j<i$ are also in the ideal.  This property makes such ideals particularly nice to work with.


To any homogeneous ideal $I$ of $R$ we can associate a Borel-fixed monomial ideal $\text{gin}_{>}(I)$ which can be thought of as a coordinate-independent version of the initial ideal.  Its existence is guaranteed by Galligo's theorem (also see \cite[Theorem 1.27]{Green98}).

\begin{theoremb}[{Galligo's theorem \cite{Galligo74}}]
\label{thm:galligo}
For any multiplicative monomial order $>$ on $R$ and any homogeneous ideal $I\subset R$, there exists a Zariski open subset $U \subset \text{GL}_m$ such that $\text{In}_{>}(g(I))$ is constant and Borel-fixed for all $g \in U$.  
\end{theoremb}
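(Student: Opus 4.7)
The plan is to establish constancy and Borel-fixedness of $\text{In}_>(g(I))$ on a Zariski open $U \subset \text{GL}_m$ by working degree by degree and then gluing. The organizing principle is that the leading monomial set $\text{In}_>(g(I))_d$ depends constructibly on $g$, and on an irreducible variety such a function is generically constant.

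First I would fix a degree $d$ and a $K$-basis $f_1, \dots, f_k$ of $I_d$. For each $g \in \text{GL}_m$, the polynomials $g(f_1), \dots, g(f_k)$ form a basis of $g(I)_d$, and writing these in the monomial basis of $R_d$ (ordered by $>$) yields a matrix $M(g)$ whose entries are polynomial functions of the entries of $g$. The leading monomials of $g(I)_d$ are identified with the pivot columns of the reduced row echelon form of $M(g)$, and each possible pivot set arises on a locally closed subset described by vanishing and non-vanishing of certain minors. Since there are only finitely many possibilities, exactly one pivot set $\Lambda_d$ has nonempty interior, and it occurs on a Zariski open subset $U_d \subset \text{GL}_m$. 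To promote this degree-wise constancy to constancy as ideals, I would use that only finitely many monomial ideals share the Hilbert function of $I$ and that all such ideals are generated in degrees $\leq D$ for some bound $D$ depending only on the Hilbert function. Then $U := U_1 \cap \cdots \cap U_D$ is nonempty and Zariski open, and on $U$ the ideal $\text{In}_>(g(I))$ equals a fixed monomial ideal $J$.

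For Borel-fixedness, observe that an upper triangular invertible $b$ satisfies $b(x_i) = b_{ii}x_i + (\text{terms strictly below } x_i \text{ in } >)$ with $b_{ii} \neq 0$, so $b$ preserves the leading monomial of any polynomial and hence commutes with $\text{In}_>$. Picking $g \in U$ with $bg \in U$ as well (a dense open condition), we compute $b(J) = b(\text{In}_>(g(I))) = \text{In}_>(b(g(I))) = \text{In}_>((bg)(I)) = J$, so $J$ is Borel-fixed. The main obstacle I anticipate is the gluing step: the degree-wise argument only gives constancy in each degree separately, and one must invoke a finiteness statement (a uniform Macaulay-type bound on the degrees of generators of Borel-fixed ideals with a given Hilbert function) to cut down to finitely many degrees before intersecting the opens $U_d$.
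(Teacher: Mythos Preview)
The paper does not give its own proof of Galligo's theorem; it is stated as background with references to Galligo's original paper and to Green's lecture notes, so there is nothing in the paper to compare your argument against directly.

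Your construction of the open set $U$ on which $\text{In}_>(g(I))$ is constant follows the standard route and is fine; the gluing issue you flag is genuine but is handled, as you say, by a uniform bound (e.g.\ Gotzmann's regularity theorem) on the degrees of generators of monomial ideals with the given Hilbert function.

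The Borel-fixedness step, however, contains a real gap. From ``$b$ preserves leading monomials'' you may conclude $\text{In}_>(b\cdot L)=\text{In}_>(L)$ for every ideal $L$, but \emph{not} the identity $b\bigl(\text{In}_>(L)\bigr)=\text{In}_>(b\cdot L)$ that your chain of equalities actually uses. Concretely, with $b(x_1)=x_1+x_2$, $b(x_2)=x_2$ and $L=(x_1)$ one has $b\bigl(\text{In}_>(L)\bigr)=(x_1+x_2)$ while $\text{In}_>(b\cdot L)=(x_1)$. Worse, your argument, if valid, would prove that $\text{gin}(I)$ is fixed by the Borel subgroup that preserves leading terms; the example $\text{gin}\bigl((x_1)\bigr)=(x_1)$ already shows this conclusion is false for that Borel. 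With consistent conventions, the generic initial ideal is fixed by the \emph{opposite} Borel (the one sending each $x_i$ to $c_ix_i$ plus a combination of the larger variables $x_j$, $j<i$), and establishing this requires a different idea---for instance the exterior-algebra/Pl\"ucker-coordinate argument in Eisenbud's \emph{Commutative Algebra}, or the maximality characterisation of $\text{gin}(I)$ among all $\text{In}_>(g(I))$ used in Green's notes.
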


\begin{defn}
The \textbf{generic initial ideal of $I$}, denoted $\text{gin}_{>}(I)$, is defined to be $\text{In}_{>}(g(I))$ where $g \in U$ is as in Galligo's theorem.
\end{defn}

The \textit{reverse lexicographic order} $>$ is a total ordering on the monomials of $R$ defined by 
\begin{enumerate}
\item if $|I| =|J|$ then $x^I > x^J$ if there is a $k$ such that $i_m = j_m$ for all $m>k$ and $i_k < j_k$; and
\item if $|I| > |J|$ then $x^I >x^J$.
\end{enumerate}
For example, $x_1^2 >x_1x_2 > x_2^2>x_1x_3>x_2x_3>x_3^2$.   From this point on, $\text{gin}(I) = \text{gin}_{>}(I)$ will denote the generic initial ideal with respect to the reverse lexicographic order.

The following theorem records two of the properties shared by $\text{gin}(I)$ and $I$.  The first statement is a consequence of the fact that Hilbert functions are invariant under making changes of coordinates and taking initial ideals.  The second statement is a result of Bayer and Stillman; for a simple proof see Corollary 2.8 of \cite{AhnMigliore07}.

\begin{thm}
\label{thm:commonproperties}
For any homogeneous ideal $I$ in $R$:
\begin{enumerate}
\item the Hilbert functions of $I$ and $\text{gin}(I)$ are equal; and
\item \text{depth}(R/I) = \text{depth}(R/\text{gin}(I)).
\end{enumerate}
\end{thm}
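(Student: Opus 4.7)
The plan is to handle the two statements independently. Statement (1) follows by combining two invariances: a linear change of coordinates and passage to an initial ideal. Statement (2) is deeper and uses a colon-ideal identity that is specific to the reverse lexicographic order.

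For (1), I would observe first that any $g \in \text{GL}_m(K)$ acts as a graded $K$-algebra automorphism of $R$, so $R/I$ and $R/g(I)$ automatically have equal Hilbert functions. Next I would invoke the standard Gr\"obner basis fact that, for any term order $>$ and any homogeneous ideal $J\subset R$, the monomials of degree $d$ not lying in $\text{in}_>(J)$ descend to a $K$-basis of $(R/J)_d$; equivalently, $R/J$ and $R/\text{in}_>(J)$ share the same Hilbert function. Applying this to $J=g(I)$ for a generic $g\in U$ as in Galligo's theorem, so that $\text{in}_>(g(I))=\text{gin}(I)$, gives (1).

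For (2), the strategy is to read the depth of $R/I$ off of $\text{gin}(I)$ via the Bayer--Stillman framework. Since $\text{gin}(I)$ is strongly stable (as $\text{char}\,K = 0$), the variable $x_m$ is a non-zero-divisor on $R/\text{gin}(I)$ if and only if no minimal generator of $\text{gin}(I)$ is divisible by $x_m$, equivalently $\text{gin}(I):x_m = \text{gin}(I)$. The crucial input is the rev-lex colon identity $\text{gin}(J:x_m) = \text{gin}(J):x_m$ for any homogeneous $J$. Granting this, $x_m$ is a non-zero-divisor on $R/\text{gin}(I)$ iff it is a non-zero-divisor on $R/g(I)$ for generic $g$, which by the coordinate-change invariance of depth is equivalent to $\text{depth}(R/I)\geq 1$. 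Passing to $R/(I,x_m)$, whose rev-lex gin is computed compatibly via the same colon identity, reduces the number of variables by one and the depth by one; the same reasoning applied inductively with $x_{m-1}, x_{m-2}, \ldots$ shows that $\text{depth}(R/I)$ and $\text{depth}(R/\text{gin}(I))$ are both equal to the length of the longest initial regular sequence of the form $x_m, x_{m-1}, \ldots$ on $R/\text{gin}(I)$.

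The main obstacle is precisely the rev-lex colon identity $\text{gin}(J:x_m) = \text{gin}(J):x_m$, which fails for other term orders and is the crux of why the reverse lexicographic order is the correct one for extracting homological invariants. Since the paper cites a short proof (Corollary 2.8 of \cite{AhnMigliore07}), my plan is to quote this identity as a black box and focus the write-up on packaging it into the inductive step above; the remaining ingredients (strong stability of $\text{gin}(I)$, depth-invariance under linear changes of coordinates, and the reading of regular sequences from minimal generators of a strongly stable ideal) are then routine.
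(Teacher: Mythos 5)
Part (1) of your proposal is fine and is exactly the paper's (cited) argument: invariance of the Hilbert function under the $\text{GL}_m$-action plus the standard-monomial basis fact for initial ideals. For part (2) the paper does not give a proof at all (it cites Bayer--Stillman, with Corollary 2.8 of the Ahn--Migliore paper for a short proof), so you are attempting strictly more; but your crux lemma is false as you state it. The identity $\text{gin}(J:x_m)=\text{gin}(J):x_m$ does not hold ``for any homogeneous $J$'': take $J=(x_2)\subset K[x_1,x_2]$, so $J:x_2=R$ and $\text{gin}(J:x_2)=R$, while $\text{gin}(J)=(x_1)$ and $\text{gin}(J):x_2=(x_1)$. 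The correct reverse-lexicographic fact lives at the level of initial ideals: for any homogeneous $J$ one has $\text{In}(J):x_m=\text{In}(J:x_m)$ (because in revlex a polynomial whose initial term is divisible by $x_m$ is itself divisible by $x_m$). Applying this to $g(I)$ for generic $g$ gives $\text{gin}(I):x_m=\text{gin}(I:h)$ where $h=g^{-1}(x_m)$ is a \emph{general linear form}; the generic coordinate change cannot be dropped.

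This matters for your inductive step. The equivalence you actually use -- $x_m$ is a nonzerodivisor on $R/\text{gin}(I)$ iff $x_m$ is a nonzerodivisor on $R/g(I)$ for generic $g$, iff $\text{depth}(R/I)\geq 1$ -- is correct once phrased through the general linear form $h$. But ``passing to $R/(I,x_m)$'' is not: for the literal variable $x_m$ neither the depth drop $\text{depth}(R/(I,x_m))=\text{depth}(R/I)-1$ nor the compatibility $\text{gin}(I+(x_m))=\text{gin}(I)+(x_m)$ need hold. You must instead pass to $R/(I,h)$ for a general linear form $h$ (equivalently, stay in generic coordinates), using the fact the paper itself invokes in Proposition \ref{prop:independentofambient}, namely $\text{gin}(J_H)=\psi(\text{gin}(J))$ for general $h$. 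Finally, a small attribution point: the black box you propose to quote (Corollary 2.8 of the cited Ahn--Migliore paper) is the depth equality itself, not the colon identity; the colon/hyperplane-section identities go back to Bayer--Stillman and Green. With these corrections your outline is the standard Bayer--Stillman argument and would constitute a genuine proof where the paper only cites one.
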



\subsection{The Generic Initial System}

The focus of this paper is on understanding the behavior of the generic initial ideals of the powers of a fixed ideal.

\begin{defn}
The \textbf{generic initial system} of a homogeneous ideal $I$ is the collection of ideals $J_{\bullet}$ such that $J_i = \text{gin}(I^i)$.
\end{defn}

\begin{defn}[\cite{ELS01}]
A \textbf{graded system of ideals} is a collection of ideals $J_{\bullet} =\{J_i\}_{i=1}^{\infty}$ such that 
$$J_i \cdot J_j \subseteq J_{i+j} \hspace{0.3in} \text{ for all } i, j \geq 1.$$
\end{defn}

\begin{lem}
The generic initial system is a graded system of monomial ideals.
\end{lem}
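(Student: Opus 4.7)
The plan is to verify two things: that each $J_i$ is a monomial ideal, and that the containment $J_i \cdot J_j \subseteq J_{i+j}$ holds. The first is immediate, since by definition $\text{gin}(I^i) = \text{In}_{>}(g(I^i))$ for a suitable $g$, and an initial ideal with respect to a monomial order is automatically a monomial ideal. So the substance of the lemma lies in the product containment.

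For the containment, the key elementary fact I would invoke is that for any two homogeneous ideals $A, B \subset R$,
\[
\text{In}_{>}(A) \cdot \text{In}_{>}(B) \subseteq \text{In}_{>}(A \cdot B),
\]
because given $f \in A$ and $h \in B$ we have $\text{In}_{>}(f) \cdot \text{In}_{>}(h) = \text{In}_{>}(fh)$, and $fh \in A \cdot B$. I would then combine this with the observation that for any $g \in \text{GL}_m(K)$ and any exponent $n$, the ring action satisfies $g(I^n) = g(I)^n$, since $g$ acts as a $K$-algebra automorphism of $R$.

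The one point that requires care, and which I would identify as the main subtlety, is that a priori the gins of $I^i$, $I^j$, and $I^{i+j}$ are computed using possibly different generic changes of coordinates. To get around this, I would apply Galligo's theorem three times to produce Zariski open subsets $U_i, U_j, U_{i+j} \subset \text{GL}_m$ on which the corresponding initial ideals are constant and equal to $\text{gin}(I^i)$, $\text{gin}(I^j)$, $\text{gin}(I^{i+j})$ respectively. Because $\text{GL}_m$ is irreducible, the intersection $U := U_i \cap U_j \cap U_{i+j}$ is a nonempty Zariski open set; I would pick any $g \in U$ to compute all three gins simultaneously.

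With such a $g$ fixed, the proof is then a one-line computation:
\[
\text{gin}(I^i) \cdot \text{gin}(I^j) = \text{In}_{>}\!\bigl(g(I)^i\bigr) \cdot \text{In}_{>}\!\bigl(g(I)^j\bigr) \subseteq \text{In}_{>}\!\bigl(g(I)^i \cdot g(I)^j\bigr) = \text{In}_{>}\!\bigl(g(I^{i+j})\bigr) = \text{gin}(I^{i+j}),
\]
which gives the required graded-system property. The only nonroutine ingredient is the simultaneous choice of $g$, which rests on the irreducibility of $\text{GL}_m$; everything else is formal.
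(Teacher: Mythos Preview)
Your proof is correct and follows essentially the same route as the paper: choose a single $g$ in the intersection $U_i \cap U_j \cap U_{i+j}$ of the three Galligo open sets, then use the multiplicativity of leading terms $\text{In}_{>}(f)\cdot\text{In}_{>}(h)=\text{In}_{>}(fh)$ to deduce the containment. The paper carries out the computation element by element rather than stating the inclusion $\text{In}_{>}(A)\cdot\text{In}_{>}(B)\subseteq\text{In}_{>}(AB)$ in one line, but the argument is the same.
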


\begin{proof}
By definition, $\text{gin}(I^i)$ is a monomial ideal. We need to show that for all $i,j \geq 1$, $\textnormal{gin}(I^i)\cdot \textnormal{gin}(I^j) \subseteq \textnormal{gin}(I^{i+j})$.  For any $l \geq 1$, let $U_l$ be the Zariski open subset of $GL_m$ such that $\text{gin}(I^l)= \text{In}(g \cdot (I^l))$ for all $g$ in $U_l$.  Since $U_i$, $U_j$, and $U_{i+j}$ are Zariski open they have a nonempty intersection; fix some $g \in U_i \cap U_j \cap U_{i+j}$.  
Given monomials $f' \in \textnormal{gin}(I^i) = \textnormal{In}(g(I^i))$ and $h' \in \textnormal{gin}(I^j) = \textnormal{In}(g(I^j))$, suppose that $f'=\textnormal{In}(g(f))$ and $h' = \textnormal{In}(g(h))$ for $f \in I^i$ and $h \in I^j$.  Now 
 $$f'\cdot h' = \textnormal{In}(g(f)) \textnormal{In}(g(h)) = \textnormal{In}(g(f) \cdot g(h)) = \textnormal{In}(g(f \cdot h)) \in \text{In}(g(I^{i+j}))$$
since $f \cdot h \in I^{i+j}$.  Thus $f'\cdot h' \in \text{gin}(I^{i+j})$ as desired.
\end{proof}

\subsection{Volume and multiplier ideals}
\label{sec:volmultideal}

In this section we will discuss the geometric interpretations of the volume and of the asymptotic multiplier ideal associated to a graded system of monomial ideals. 

\begin{defn} [\cite{ELS03}]
\label{defn:algebraicvolume}
Let $\mathrm{a}_{\bullet}$ be a graded system of zero-dimensional ideals in $R=K[x_1, \dots, x_m]$.  The \textbf{volume of $\mathrm{a}_{\bullet}$} is 
$$\mathrm{vol}(\mathrm{a}_{\bullet}) := \limsup_{n \rightarrow \infty} \frac{m! \cdot \mathrm{length}(R/\mathrm{a}_n)}{n^m}.$$
\end{defn}

Let $J$ be a monomial ideal of $R$.  We may regard $J$ as a subset $\Lambda$ of $\mathbb{N}^m$ consisting of the points $\lambda$ such that $x^{\lambda} \in J$.  The \textit{Newton polytope} $P_J$ of $J$ is the convex hull of $\Lambda$ regarded as a subset of $\mathbb{R}^m$.  Scaling the polytope $P_J$ by a factor of $r$ gives another polytope which we will denote $rP_J$.

If $\mathrm{a}_{\bullet}$ is a graded system of monomial ideals in $R$ the polytopes of $\{ \frac{1}{q} P_{\mathrm{a}_q} \}_q$ are nested: $\frac{1}{c}P_{\mathrm{a}_c} \subset \frac{1}{c+1}P_{\mathrm{a}_{c+1}}$ for all $c \geq 1$.  The \textit{limiting polytope $P$ of $\mathrm{a}_{\bullet}$} is the limit of the polytopes in this set:
$$P = \bigcup_{q \in \mathbb{N}^*} \frac{1}{q} P_{\mathrm{a}_q}.$$

Under the additional assumption that the ideals of $\mathrm{a}_{\bullet}$ are zero-dimensional, the closure of each set $\mathbb{R}^m_{\geq 0} \backslash P_{\mathrm{a}_q}$ in $\mathbb{R}^m$ is compact. This closure is denoted by $Q_q$ and we let 
$$Q = \bigcap_{q \in \mathbb{N}^*} \frac{1}{q} Q_q.$$
Note that finding the volume of $Q$, $\mathrm{vol}(Q)$, is the same as finding the volume underneath of the limiting polytope $P$.  It turns out that this geometric volume is closely tied to the algebraic volume of $\mathrm{a}_{\bullet}$.

\begin{prop} [\cite{Mustata02}]
\label{prop:volumesequal}
If $\textrm{a}_{\bullet}$ is a graded system of zero-dimensional monomial ideals in $R=K[x_1, \dots, x_m]$ and $Q$ is as defined above,
$$\mathrm{vol}(\textrm{a}_{\bullet}) = m! \mathrm{ vol}(Q).$$
\end{prop}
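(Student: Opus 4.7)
The plan is to express $\mathrm{length}(R/\mathrm{a}_n)$ as a sum of unit-cube volumes in $\mathbb{R}^m_{\geq 0}$, approximate that sum by $\mathrm{vol}(Q_n)$ up to an error that is lower order in $n$, and then pass to the limit using $\tfrac{1}{n}Q_n \downarrow Q$.

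First, since $\mathrm{a}_n$ is a monomial ideal, $R/\mathrm{a}_n$ has a $K$-basis consisting of the monomials $x^{\lambda}$ with $\lambda \in \mathbb{Z}^m_{\geq 0} \setminus \Lambda_{\mathrm{a}_n}$, and zero-dimensionality makes this set finite. Attaching to each such $\lambda$ the half-open unit cube $C_\lambda := \lambda + [0,1)^m$ and setting $U_n := \bigsqcup_{\lambda} C_{\lambda}$, I have
$$\mathrm{length}(R/\mathrm{a}_n) \;=\; \mathrm{vol}(U_n).$$
The symmetric difference $U_n \mathbin{\triangle} Q_n$ is contained in a tubular neighborhood of the staircase boundary $\partial Q_n$ of thickness at most $\sqrt{m}$. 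Because the graded-system axiom gives $\mathrm{a}_1^n \subseteq \mathrm{a}_n$, the region $Q_n$ fits inside a box $[0, cn]^m$, where $c$ depends only on the zero-dimensional ideal $\mathrm{a}_1$; hence $\partial Q_n$ has $(m-1)$-dimensional measure $O(n^{m-1})$, and
$$\mathrm{length}(R/\mathrm{a}_n) \;=\; \mathrm{vol}(Q_n) + O(n^{m-1}).$$

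Second, from $\mathrm{vol}(Q_n) = n^m\, \mathrm{vol}(\tfrac{1}{n}Q_n)$ I obtain
$$\frac{m! \cdot \mathrm{length}(R/\mathrm{a}_n)}{n^m} \;=\; m! \cdot \mathrm{vol}\bigl(\tfrac{1}{n}Q_n\bigr) + O(1/n).$$
The paper has already recorded that $\tfrac{1}{n}P_{\mathrm{a}_n}$ is nested increasing, so $\tfrac{1}{n}Q_n$ is nested decreasing; it is a decreasing family of compact subsets of a fixed cube with intersection $Q$, so continuity of Lebesgue measure under decreasing intersections yields $\mathrm{vol}(\tfrac{1}{n}Q_n) \to \mathrm{vol}(Q)$. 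Combining the two displays gives $\mathrm{vol}(\mathrm{a}_{\bullet}) = m!\, \mathrm{vol}(Q)$, and incidentally shows that the $\limsup$ in Definition \ref{defn:algebraicvolume} is an actual limit.

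The principal technical obstacle is the $O(n^{m-1})$ bound on the error term, which rests on a uniform estimate for the "surface area" of the staircase $\partial Q_n$. This is precisely where both hypotheses are essential: zero-dimensionality forces $Q_n$ to be bounded, and the graded-system property $\mathrm{a}_1^n \subseteq \mathrm{a}_n$ constrains its diameter to grow only linearly in $n$, so that boundary contributions remain of lower order than the bulk volume.
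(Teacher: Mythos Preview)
Your argument has a genuine gap at the step
\[
\mathrm{length}(R/\mathrm{a}_n)\;=\;\mathrm{vol}(Q_n)+O(n^{m-1}).
\]
The set $U_n$ is (up to a genuine $\sqrt m$-tube) the complement of the \emph{staircase} region $\Lambda_{\mathrm{a}_n}+\mathbb{R}^m_{\ge 0}$, whereas $Q_n$ is the complement of the \emph{convex hull} $P_{\mathrm{a}_n}$. These coincide only when $\mathrm{a}_n$ is integrally closed. In general the discrepancy $U_n\setminus Q_n$ contains every cube $C_\lambda$ with $x^\lambda\in\overline{\mathrm{a}_n}\setminus\mathrm{a}_n$, and such $\lambda$ need not lie within bounded distance of $\partial P_{\mathrm{a}_n}$; already for $\mathrm{a}_n=(x^2,y^2)^n$ the tube must have thickness $2$, not $\sqrt 2$, and for more complicated systems there is no uniform bound of this shape at all. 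What you actually need is $\mathrm{length}(\overline{\mathrm{a}_n}/\mathrm{a}_n)=o(n^m)$, and your use of $\mathrm{a}_1^n\subseteq\mathrm{a}_n$ only bounds the diameter of $Q_n$; it does not control this integral-closure defect.

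Bridging that defect is exactly where the graded-system axiom does real work, via a two-scale argument: from $\mathrm{a}_k^{\,j}\subseteq\mathrm{a}_{jk}$ one gets $\mathrm{length}(R/\mathrm{a}_{jk})\le\mathrm{length}(R/\mathrm{a}_k^{\,j})=j^m\,\mathrm{vol}(Q_k)+O_k(j^{m-1})$, so $\limsup_n \mathrm{length}(R/\mathrm{a}_n)/n^m\le\mathrm{vol}(\tfrac1kQ_k)$ for every $k$, and then one lets $k\to\infty$. This is precisely the content of the results of Musta\c t\u a (Theorem~1.7 and Lemma~2.13 of \cite{Mustata02}) that the paper simply cites; your monotone-convergence step for $\tfrac1nQ_n\downarrow Q$ and your lower bound $\mathrm{vol}(Q_n)\lesssim\mathrm{length}(R/\mathrm{a}_n)$ are fine, but the matching upper bound needs this extra ingredient.
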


\begin{proof}
This is an immediate consequence of Theorem 1.7 and Lemma 2.13 of \cite{Mustata02}.
\end{proof}

Although multiplier ideals are an important tool in algebraic geometry, they are usually difficult to compute explicitly.  Since we will only be concerned with calculating multiplier ideals of monomial ideals we will give a definition only for this special case.  See \cite{Blickle04} for an introduction to multiplier ideals or \cite{Laz04} for a more general treatment. 

\begin{defn}  [\cite{Howald01}]
Let $J \subset R$ be a monomial ideal and let $P$ be its Newton polytope.  The \textbf{multiplier ideal of $J$ with coefficient $c$}, denoted $\mathcal{J}(c\cdot J)$, is the monomial ideal 
$$\mathcal{J}(c \cdot J) := \{x^{\lambda} : \lambda + \textnormal{\textbf{1}} \in \text{Int}(cP_J) \cap \mathbb{N}^m\}.$$
\end{defn}

The asymptotic definition requires the following lemma.

\begin{lem} [Lemma 1.3 of \cite{ELS01}]
\label{lem:asympjustification}
Let $J_{\bullet}$ be a graded system of ideals and fix a rational number $c>0$.  Then for $p \gg 0$ the multiplier ideals $\mathcal{J}(\frac{c}{p}\cdot J_p)$ coincide.
\end{lem}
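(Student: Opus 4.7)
The plan is to combine the graded-system condition with the Howald description of monomial multiplier ideals to show that the ideals $\mathcal{J}(\tfrac{c}{p}\cdot J_p)$ form a directed ascending family, and then invoke Noetherianity. First, iterating the inclusion $J_p \cdot J_q \subseteq J_{p+q}$ gives $J_p^k \subseteq J_{kp}$ for every $k \geq 1$. Since taking Newton polytopes converts products to Minkowski sums, so that $P_{J_p^k} = k\cdot P_{J_p}$, and respects containment, this translates to $\tfrac{1}{p}P_{J_p} \subseteq \tfrac{1}{kp}P_{J_{kp}}$. Thus the rescaled polytopes form an increasing family when the index $p$ is partially ordered by divisibility.

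Next, I would plug into Howald's formula: $\mathcal{J}(\tfrac{c}{p}\cdot J_p)$ is the monomial ideal generated by those $x^\lambda$ with $\lambda + \mathbf{1}$ in the interior of $\tfrac{c}{p}P_{J_p} = c \cdot \tfrac{1}{p}P_{J_p}$. Nesting of these polytopes immediately forces nesting of the multiplier ideals, so that $\mathcal{J}(\tfrac{c}{p}\cdot J_p) \subseteq \mathcal{J}(\tfrac{c}{kp}\cdot J_{kp})$ for all $k \geq 1$. Any two indices $p$ and $q$ share the common multiple $pq$, which dominates both, so the family $\{\mathcal{J}(\tfrac{c}{p}\cdot J_p)\}_p$ is upward directed.

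Finally, since $R$ is Noetherian, a directed family of ideals has a (necessarily unique) maximum, attained at some index $p_0$. For any $p$ divisible by $p_0$, the nesting gives $\mathcal{J}(\tfrac{c}{p_0}\cdot J_{p_0}) \subseteq \mathcal{J}(\tfrac{c}{p}\cdot J_p)$, while maximality of $p_0$ gives the reverse inclusion. Hence $\mathcal{J}(\tfrac{c}{p}\cdot J_p) = \mathcal{J}(\tfrac{c}{p_0}\cdot J_{p_0})$ for every such $p$, which is exactly the conclusion of the lemma.

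The main obstacle is conceptual rather than technical: one needs the scaling identity $\mathcal{J}(t\cdot \mathfrak{a}^k) = \mathcal{J}(tk\cdot \mathfrak{a})$, which is what couples the algebraic inclusion $J_p^k\subseteq J_{kp}$ to the containment of multiplier ideals at shifted coefficients. Fortunately Howald's formula makes this transparent for monomial ideals, and the rest of the argument is just Noetherianity plus the directedness coming from taking common multiples. One should also keep in mind that the phrase ``for $p \gg 0$'' here is naturally interpreted as ``for $p$ sufficiently divisible'' (say, for all multiples of $p_0$), which is the standard convention in this asymptotic setting.
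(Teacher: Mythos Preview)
The paper does not actually prove this lemma; it merely quotes it from \cite{ELS01} and uses it as a black box to justify Definition~\ref{defn:asympmultideal}. So there is no ``paper's own proof'' to compare against.

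Your argument is the standard one (directed family plus Noetherianity), and it is correct as far as it goes, but note that you have proved a weaker statement than the lemma asserts. The lemma is stated for an arbitrary graded system $J_\bullet$, whereas your proof goes through Howald's formula and Newton polytopes, hence applies only to graded systems of \emph{monomial} ideals. The general argument in \cite{ELS01} replaces your polytope manipulation with two facts that hold for multiplier ideals of arbitrary ideals: the scaling identity $\mathcal{J}(t\cdot\mathfrak{a}^k)=\mathcal{J}(tk\cdot\mathfrak{a})$ (which you yourself flag as the key point) and the monotonicity $\mathfrak{a}\subseteq\mathfrak{b}\Rightarrow\mathcal{J}(t\cdot\mathfrak{a})\subseteq\mathcal{J}(t\cdot\mathfrak{b})$. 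With those in hand, $J_p^k\subseteq J_{kp}$ gives $\mathcal{J}(\tfrac{c}{p}\cdot J_p)=\mathcal{J}(\tfrac{c}{kp}\cdot J_p^k)\subseteq\mathcal{J}(\tfrac{c}{kp}\cdot J_{kp})$ directly, and the rest of your argument goes through verbatim. Since the paper only ever applies the lemma to monomial systems, your restricted version is in fact adequate for its purposes; but you should be aware that you have not proved the lemma as stated. Your closing remark about ``$p\gg 0$'' meaning ``$p$ sufficiently divisible'' is well taken and matches the convention in \cite{ELS01}.
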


\begin{defn}  
\label{defn:asympmultideal}
Let $J_{\bullet} = \{J_k\}_{k\in \mathbb{N}}$ be a graded system of ideals on $R$.  Given $c>0$ the \textbf{asymptotic multiplier ideal of $J_{\bullet}$ with coefficient $c$} is 
$$\mathcal{J}(c\cdot J_{\bullet}) = \mathcal{J}(\frac{c}{p} \cdot J_p)$$
for any sufficiently large $p$ (guaranteed by Lemma \ref{lem:asympjustification}).  When $J_{\bullet}$ is a graded system of monomial ideals with limiting polytope $P$,
$$\mathcal{J}(c \cdot J_{\bullet}) = \{x^{\lambda} : \lambda + \textnormal{\textbf{1}} \in \text{Int}(cP) \cap \mathbb{N}^m\}$$
for $p \gg 0$.
\end{defn}

Therefore, the volume and the asymptotic multiplier ideal of a graded system of monomial ideals are entirely determined by the limiting polytope $P$ of the system.
\section{The Generic Initial System of a Complete Intersection}

A homogeneous ideal $I = (f_1, \dots, f_r)$ is a \textit{complete intersection of type $(d_1, \dots, d_r)$} if $f_1, \dots, f_r$ is a regular sequence on $R$ and $\text{deg}(f_i) = d_i$.  Since the $f_i$ are homogeneous, any permutation of $f_1, \dots, f_r$ is still a regular sequence; therefore, we may assume that $d_1 \leq \cdots \leq d_r$.  

The goal of this section is to describe the reverse lexicographic generic initial ideals $\text{gin}(I^n)$ for such a complete intersection.  The following result tells us that no matter how many variables the ambient ring $R$ has the minimal generators of these generic initial ideals only involve the first $r$ variables.  

\begin{lem}
\label{lem:varsingenset}
Let $I$ be a type $(d_1, \dots, d_r)$ complete intersection in $R=K[x_1, \dots, x_m]$ (so that $m \geq r$) and let $A_n$ denote the set of minimal generators of $\text{gin}(I^n)$.  Then the monomials of $A_n$ are contained in $K[x_1,\dots, x_r]$ and the largest degree element of $A_n$ is of the form $x_r^{p_r(n)}$ for some $p_r(n) \geq 1$.
\end{lem}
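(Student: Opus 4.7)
The plan is to treat the two assertions separately: first showing that every minimal generator of $\text{gin}(I^n)$ lies in $K[x_1, \ldots, x_r]$, and then exhibiting $x_r^{p_r(n)}$ as a minimal generator of maximum degree.

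For the first claim, I would exploit the Cohen-Macaulayness of $R/I^n$, which holds for every $n \geq 1$ because $I$ is a complete intersection: the associated graded ring $\text{gr}_I R$ is a polynomial ring over $R/I$, so each quotient $I^n/I^{n+1} \cong \text{Sym}^n(I/I^2)$ is a free $R/I$-module, giving $R/I^n$ depth $m - r$ by induction on $n$. Theorem \ref{thm:commonproperties} then yields $\text{depth}(R/\text{gin}(I^n)) = m - r$. Since $\text{gin}(I^n)$ is strongly stable in characteristic $0$, the Eliahou-Kervaire formula for the projective dimension of a stable monomial ideal identifies $\text{pd}(R/\text{gin}(I^n))$ with the largest index $k$ such that $x_k$ divides some minimal generator. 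Combining with Auslander-Buchsbaum forces that largest index to equal $r$, so every minimal generator lies in $K[x_1, \ldots, x_r]$.

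For the second claim, I first locate $x_r^{p_r(n)}$. Since $\text{gin}(I^n)$ and $I^n$ share their Hilbert function, they share Krull dimension, whence $\text{ht}(\text{gin}(I^n)) = r$. The associated primes of any Borel-fixed monomial ideal have the form $(x_1, \ldots, x_k)$, so the unique minimal prime of height $r$ is $(x_1, \ldots, x_r)$; thus $\sqrt{\text{gin}(I^n)} = (x_1, \ldots, x_r)$, and in particular some power of $x_r$ lies in $\text{gin}(I^n)$. Letting $p_r(n) \geq 1$ be the least such exponent, $x_r^{p_r(n)}$ is a minimal generator because its only proper divisors are smaller powers of $x_r$, none of which lies in the ideal.

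To see $x_r^{p_r(n)}$ attains the maximum degree in $A_n$, I would show that every monomial of total degree $p_r(n)$ in $K[x_1, \ldots, x_r]$ lies in $\text{gin}(I^n)$. Starting from $x_r^{p_r(n)}$, strong stability permits replacing an $x_r$-factor by any $x_j$ with $j<r$; a short induction on $b_1 + \cdots + b_{r-1}$ reaches every monomial $x_1^{b_1} \cdots x_r^{b_r}$ of total degree $p_r(n)$. Hence $(x_1, \ldots, x_r)^{p_r(n)} R \subseteq \text{gin}(I^n)$, and any monomial of degree strictly greater than $p_r(n)$ is divisible by a degree-$p_r(n)$ element already in the ideal, so cannot be a minimal generator. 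Thus $p_r(n)$ is the largest generator degree in $A_n$, realized by $x_r^{p_r(n)}$.

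The main obstacle is this final step: one must pass from a height-theoretic quantity — the smallest exponent $p_r(n)$ with $x_r^{p_r(n)} \in \text{gin}(I^n)$ — to a generator-degree bound. Strong stability is precisely the bridge, propagating $x_r^{p_r(n)}$ throughout the full $p_r(n)$-th power of the maximal ideal of $K[x_1, \ldots, x_r]$ and thereby capping generator degrees at exactly $p_r(n)$.
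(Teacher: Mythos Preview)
Your proof is correct and follows essentially the same route as the paper. The paper packages your depth and dimension arguments into a single cited result of Herzog--Srinivasan (their Proposition~\ref{prop:DMrelations}), which for a Borel-fixed ideal $J$ identifies $m-\text{depth}(R/J)$ with the largest variable index appearing in a minimal generator and $m-\dim(R/J)$ with the largest $t$ such that $x_t^j\in J$; you instead derive these two facts separately via Eliahou--Kervaire/Auslander--Buchsbaum and via the associated-prime structure of Borel-fixed ideals, which is exactly how that proposition is proved. The final strong-stability argument bounding generator degrees by $p_r(n)$ is identical to the paper's.
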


This lemma is a consequence of the following result of Herzog and Srinivasan (see Lemma 3.1 of \cite{HS98}) which relates the depth and dimension of a Borel-fixed monomial ideal to the variables appearing in its minimal generating set.  

\begin{prop}
\label{prop:DMrelations}
Let $J$ be a Borel-fixed monomial ideal in $R$ and define 
$$D(J) := \text{max} \{t | x_t^j \in J \text{ for some positive integer } j \}$$
and 
$$M(J) := \text{max} \{t | x_t \text{ appears in some minimal generator of } J \}.$$
Then 
\begin{enumerate}
\item $\text{dim}(R/J) = m-D(J)$; and
\item $\text{depth}(R/J) = m-M(J)$.
\end{enumerate}
\end{prop}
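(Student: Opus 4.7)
My plan is to treat the two parts of Proposition \ref{prop:DMrelations} separately. Part (1) I would prove directly from strong stability (which holds since $K$ has characteristic zero), and part (2) I would reduce to the Eliahou--Kervaire formula for the projective dimension of a stable monomial ideal together with the Auslander--Buchsbaum formula.

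For (1), the idea is to show that the minimal prime of smallest height over $J$ is $(x_1,\dots,x_{D(J)})$; then
$$\text{dim}(R/J) = m - \min_{P\supseteq J} \text{ht}(P) = m - D(J).$$
The inequality $\text{ht}(P)\ge D(J)$ for any prime $P\supseteq J$ is the easy direction: from $x_{D(J)}^{\,j}\in J$, strong stability applied repeatedly yields $x_i^{\,j}\in J$ for every $i\le D(J)$, so $P$ must contain each of $x_1,\dots,x_{D(J)}$. The reverse containment $J\subseteq(x_1,\dots,x_{D(J)})$ requires ruling out a minimal generator $x^\alpha$ supported only on variables of index $>D(J)$. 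The plan is to use strong stability iteratively on the largest-index variable in such an $x^\alpha$, collapsing its exponent into the next largest index, until one obtains a pure power $x_s^{N}\in J$ with $s>D(J)$, contradicting maximality in the definition of $D(J)$. Once $J\subseteq(x_1,\dots,x_{D(J)})$ is established, this initial-segment prime is an over-prime of $J$ of height exactly $D(J)$, which is minimal by the other direction.

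For (2), I would invoke the Eliahou--Kervaire minimal free resolution of the stable monomial ideal $J$, which gives
$$\text{pd}(J) = \max\{\max(u) : u \text{ a minimal generator of } J\} - 1 = M(J) - 1,$$
so $\text{pd}(R/J) = M(J)$. Since $R$ is regular and $R/J$ is a finitely generated $R$-module with finite projective dimension, the Auslander--Buchsbaum formula yields
$$\text{depth}(R/J) = \text{depth}(R) - \text{pd}(R/J) = m - M(J),$$
as required.

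The main obstacle is the combinatorial step in part (1): the iterated strong-stability argument that converts a hypothetical minimal generator supported on high-index variables into a pure power with index exceeding $D(J)$. One has to choose the right invariant to induct on (say, the largest variable index in the support, together with its multiplicity) so that each application of strong stability either reduces the largest index or consolidates exponents into a smaller index. Part (2) is essentially bookkeeping once Eliahou--Kervaire and Auslander--Buchsbaum are in hand, though one should verify that the invariant $\max(u)$ appearing in the Eliahou--Kervaire formula coincides with $M(J)$ as defined in the statement.
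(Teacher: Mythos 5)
Your argument is correct, but note that the paper itself gives no proof of this proposition: it is quoted as Lemma 3.1 of Herzog--Srinivasan \cite{HS98}, so there is no internal proof to compare against. Your route is the standard one and is sound. For part (2), the Eliahou--Kervaire resolution of a (strongly) stable ideal is minimal and its top homological degree is $\max\{\max(u) : u \text{ a minimal generator}\}-1 = M(J)-1$, so $\operatorname{pd}(R/J)=M(J)$, and Auslander--Buchsbaum (with depth taken with respect to the homogeneous maximal ideal) gives $\operatorname{depth}(R/J)=m-M(J)$; you are also right to invoke characteristic $0$ so that Borel-fixed implies strongly stable, which the paper records. For part (1), the step you flag as the ``main obstacle'' is in fact routine: if a minimal generator $x^{\alpha}$ were supported only on variables of index $>D(J)$, let $s$ be the smallest index in its support and repeatedly apply strong stability to replace one occurrence of the largest-index variable by $x_s$ (or by the next-largest index, as you propose); each step preserves degree and strictly decreases the exponent attached to the top index, so the process terminates with $x_s^{|\alpha|}\in J$ and $s>D(J)$, contradicting the maximality in the definition of $D(J)$. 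Combined with your easy direction ($x_i^{\,j}\in J$ for all $i\le D(J)$ forces every prime over $J$ to contain $(x_1,\dots,x_{D(J)})$), this identifies $(x_1,\dots,x_{D(J)})$ as a minimal prime of least height, giving $\dim(R/J)=m-D(J)$. So your proposal supplies a complete, self-contained proof of a result the paper only cites.
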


Note that when $I$ is a complete intersection of type $(d_1, \dots, d_r)$ in $R$,
$$\text{dim}(R/I^n) = \text{depth}(R/I^n) = m-r$$
for all $n \geq 1$.
It then follows by Theorem \ref{thm:commonproperties} that the depth and dimension of $R/\text{gin}(I^n)$ are equal to $m-r$ as well.

\begin{proof}[Proof of Lemma \ref{lem:varsingenset}]
By Proposition \ref{prop:DMrelations},
$$D(\text{gin}(I^n)) = m-\text{dim}(R/\text{gin}(I^n)) = r = m- \text{depth}(R/\text{gin}(I^n)) = M(\text{gin}(I^n))$$
This means that the minimal generating set $A_n$ of $\text{gin}(I^n)$ is contained in $S=K[x_1, \dots, x_r]$ and that $A_n$ contains a power of $x_r$, say $x_r^{p_r(n)}$. 
The fact that $\text{gin}(I^n)$ is strongly stable means that we can replace each $x_r$ in $x_r^{p_r(n)}$ with any variable $x_1, \dots, x_r$ and still get an element of $\text{gin}(I^n)$.  Therefore, any monomial $x^J \in S$ of degree $p_r(n)$ is also contained in $\text{gin}(I^n)$.  Now it is clear that $A_n \subset S$ cannot contain any element of degree greater than $p_r(n)$.
\end{proof}

Given the fact that the generators of $\text{gin}(I^n)$ only involve the first $r$ variables it is natural to wonder whether we can restrict our attention to the case where $R=K[x_1, \dots, x_r]$ (that is, $r=m$).  The following result tells us that the answer is essentially `yes'.

\begin{prop}
\label{prop:independentofambient}
Given a type $(d_1, \dots, d_r)$ complete intersection $I$ in $R=K[x_1, \dots, x_m]$, so that $m \geq r$, there exists a type $(d_1, \dots, d_r)$ complete intersection $L$ of $S=K[x_1, \dots, x_r]$ such that the minimal generators of $\text{gin}(L^n)$ are the same as the minimal generators of $\text{gin}(I^n)$ for all $n \geq 1$.
\end{prop}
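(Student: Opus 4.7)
The plan is to obtain $L$ from $I$ by applying a generic linear change of coordinates in $R$ and then restricting to the subring $S = K[x_1, \dots, x_r]$, using the compatibility of the reverse lexicographic order with quotienting by a regular sequence in the last variables.

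First I would choose a $g \in \text{GL}_m(K)$ sufficiently generic that, simultaneously for all $n \geq 1$, $\text{In}_{\text{revlex}}(g(I^n)) = \text{gin}(I^n)$ and $x_{r+1}, \dots, x_m$ form a regular sequence on $R/g(I^n)$. The second condition is achievable because $\text{depth}(R/I^n) = m - r$ for every $n$ by Theorem~\ref{thm:commonproperties}, so a generic choice of $m-r$ linear forms is regular on each quotient. Writing $I = (f_1, \dots, f_r)$, I then let $h_i$ be the reduction of $g(f_i)$ modulo $(x_{r+1}, \dots, x_m)$ and set $L = (h_1, \dots, h_r) \subset S$. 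For generic $g$ the $h_i$ retain their original degrees and still form a regular sequence in $S$, since regular sequences of homogeneous elements can be permuted, so $L$ is a complete intersection of type $(d_1, \dots, d_r)$. Because $g$ is an automorphism, $g(I^n) = g(I)^n$, and reducing modulo $(x_{r+1}, \dots, x_m)$ gives $L^n$.

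The main technical step is to transfer reverse lex initial ideals across the quotient $R \twoheadrightarrow S$. I would invoke the classical property that, for the reverse lex order, if $x_m$ is a non-zero-divisor on $R/J$ then $x_m$ does not appear in any minimal generator of $\text{In}_{\text{revlex}}(J)$ and
$$\text{In}_{\text{revlex}}(J + (x_m)) = \text{In}_{\text{revlex}}(J) + (x_m).$$
Iterating this for $x_m, x_{m-1}, \dots, x_{r+1}$ applied to $J = g(I^n)$ shows that $\text{In}_{\text{revlex}, S}(L^n)$ equals the ideal of $S$ generated by the minimal generators of $\text{gin}(I^n)$. By Lemma~\ref{lem:varsingenset} those minimal generators already lie in $S$, so this ideal of $S$ has exactly the same minimal generators as $\text{gin}(I^n)$.

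Finally, one has to verify that $\text{In}_{\text{revlex}, S}(L^n) = \text{gin}(L^n)$, i.e., that the $L$ we constructed is already in generic coordinates for the $\text{GL}_r$-action on $S$. This can be arranged by composing $g$ with a block-diagonal element acting as a generic $h \in \text{GL}_r$ on $x_1, \dots, x_r$ and as the identity on $x_{r+1}, \dots, x_m$, without disturbing either the regular sequence property or the degrees. The main obstacle in this plan is establishing the compatibility statement for the reverse lex order with respect to killing a regular sequence of last variables — a classical but subtle property that is ultimately what makes reverse lex, and not other term orders, the natural choice for this asymptotic problem.
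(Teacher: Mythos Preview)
Your approach is correct and is essentially the same as the paper's: both hinge on the reverse-lex compatibility between initial ideals and quotienting by the last variable when it is a nonzerodivisor. The paper organizes this as an induction on $m$, reducing one variable at a time and invoking the Bayer--Stillman/Green formula $\text{gin}(J_H)=\psi(\text{gin}(J))$ for a generic hyperplane $H$ (Corollary~2.5 of \cite{Green98}); that citation packages together both the revlex compatibility you spell out and the ``generic on the $S$ side'' issue you handle separately at the end, so the paper's write-up is shorter but the underlying argument is the same.
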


\begin{proof}
We will proceed by induction on $m$.  The statement is trivial in the case where $m=r$.  Assume that it holds for all $m \leq M $ such that $m \geq r$ and let $I$ be a complete intersection of type $(d_1, \dots, d_r)$ in $K[x_1, \dots, x_{M+1}]$.  If $h=a_1x_1 + \cdots + a_{M+1}x_{M+1}$ is a general linear form in $K[x_1, \dots, x_{M+1}]$, consider the ring isomorphism 
$$\phi: \frac{K[x_1, \dots, x_{M+1}]}{(h)} \rightarrow K[x_1, \dots, x_M]$$
given by sending $x_i$ to $x_i$ for $i=1, \dots, M$ and $x_{M+1}$ to $-\sum_{i=1}^{M} \frac{a_i}{a_{M+1}}x_i$.  If $J$ is an ideal of $K[x_1, \dots, x_M+1]$, let $J_H = \phi((J+(h))/(h))$.  Since $h$ is a general linear form, the ideal $I_H$ is a complete intersection of type $(d_1, \dots, d_r)$ in $K[x_1, \dots, x_M]$.

If 
$$\psi: K[x_1, \dots, x_{M+1}] \rightarrow K[x_1, \dots, x_M]$$
is the map given by sending $x_i$ to $x_i$ for $i=1, \dots, M$ and $x_M$ to $0$, we have the following well-known relation between the generic initial ideal of any ideal $J$ of $K[x_1, \dots, x_{M+1}]$ and the generic initial ideal of $J_H$ (see \cite{BS87} and Corollary 2.5 of \cite{Green98}):
$$\text{gin}(J_H) = \psi(\text{gin}(J)).$$
By Lemma 3.1, no minimal generator of $\text{gin}(I^n)$ involves the variable $x_{M+1}$ and so 
$$\text{gin}((I_H)^n) = \text{gin}((I^n)_H) = \psi(\text{gin}(I^n))$$
has the same generators as $\text{gin}(I^n)$.  
By the inductive assumption, there exists a type $(d_1, \dots, d_r)$ complete intersection $L \subset K[x_1, \dots, x_r]$ such that $\text{gin}((I_H)^n)$ - and thus $\text{gin}(I^n)$ - has the same minimal generators as $\text{gin}(L^n)$ for all $n \geq 1$.
Therefore, the statement holds for all $m \geq r$.
\end{proof}


Fix an $r$-complete intersection $I$ of type $(d_1, \dots, d_r)$.  Let $p_i(n)$ denote the smallest power of $x_i$ contained inside of $\text{gin}(I^n)$ so that $(x_1^{p_1(n)}, x_2^{p_2(n)}, \dots, \linebreak[1] x_r^{p_r(n)})$ is the largest ideal generated by variable powers that is contained inside of $\text{gin}(I^n)$.

\begin{lem}
\label{lem:p1value}
$p_1(n) = nd_1$ for all $n \geq 1$.
\end{lem}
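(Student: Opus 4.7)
The plan is to show $p_1(n) = nd_1$ via two inequalities, using the equality of Hilbert functions (Theorem \ref{thm:commonproperties}) together with the strong stability of $\text{gin}(I^n)$.

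First I would compute the minimum degree in which $I^n$ has a nonzero element. Since $I^n$ is generated as an $R$-module by the products $f_{i_1}\cdots f_{i_n}$, each of which has degree $d_{i_1}+\cdots+d_{i_n}\ge nd_1$, any homogeneous element of $I^n$ written as $\sum r_J f^J$ must have degree at least $nd_1$ (otherwise the coefficients $r_J$ would be forced into negative degrees and therefore vanish). The element $f_1^n$ of degree $nd_1$ realizes this bound, so the initial nonzero graded piece of $I^n$ occurs in degree $nd_1$. By Theorem \ref{thm:commonproperties}(1), the same is true of $\text{gin}(I^n)$.

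For the upper bound $p_1(n)\le nd_1$, take any monomial $m=x_{i_1}^{a_1}\cdots x_{i_k}^{a_k} \in \text{gin}(I^n)$ of degree $nd_1$ (which exists by the previous step). Since $\text{gin}(I^n)$ is strongly stable (being Borel-fixed in characteristic $0$), we may successively replace each occurrence of a variable $x_{i_j}$ with $x_1$, remaining inside the ideal at each step. After $nd_1$ such substitutions we obtain $x_1^{nd_1} \in \text{gin}(I^n)$, which forces $p_1(n)\le nd_1$.

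For the lower bound, note that $x_1^{p_1(n)}$ is an element of $\text{gin}(I^n)$ of degree $p_1(n)$, and by the first paragraph $\text{gin}(I^n)$ contains no nonzero elements of degree smaller than $nd_1$, so $p_1(n)\ge nd_1$. Combining the two inequalities gives $p_1(n)=nd_1$. There is no real obstacle here; the only subtlety is the Hilbert-function argument for the minimum degree of $I^n$, but this is routine once one uses that $f_1,\dots,f_r$ is a regular sequence of homogeneous elements with $d_1$ minimal.
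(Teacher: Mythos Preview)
Your proof is correct and follows essentially the same approach as the paper's: both arguments identify $p_1(n)$ with the minimal degree of a nonzero element of $\text{gin}(I^n)$ via strong stability, and then compute that minimal degree as $nd_1$ using the equality of Hilbert functions (implicit in the paper's phrase ``the smallest degree element of $I^n$, and thus of $\text{gin}(I^n)$''). The paper packages this as a single observation that $p_1(n)$ equals the minimal degree, while you split it into two inequalities, but the content is the same.
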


\begin{proof}
Since $\text{gin}(I^n)$ is strongly stable there is no element of $\text{gin}(I^n)$ of degree smaller than $p_1(n)$:  if there was such a monomial $x^J$ we could replace each $x_2, \dots, x_m$ in $x^J$ with $x_1$ and get a power of $x_1$ smaller than $p_1(n)$ contained in $\text{gin}(I^n)$. If $f_1$ is the generator of $I$ of degree $d_1$, the smallest degree element of $I^n$, and thus of $\text{gin}(I^n)$, is of degree $nd_1$.  Therefore, $p_1(n)=nd_1$.
\end{proof}

\begin{lem}
\label{lem:increasingps}
$p_1(n) \leq p_2(n) \leq \cdots \leq p_r(n)$ for all $n \geq 1$.
\end{lem}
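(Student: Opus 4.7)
The plan is to use the strongly stable property of $\text{gin}(I^n)$ directly. Since $K$ has characteristic $0$ and $\text{gin}(I^n)$ is Borel-fixed, it is strongly stable: whenever $x^J \in \text{gin}(I^n)$ and $x_i \mid x^J$, we have $\tfrac{x_j}{x_i} x^J \in \text{gin}(I^n)$ for every $j < i$. This is exactly the tool needed to move powers of a later variable to an earlier one.

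Fix $i \geq 2$. By definition $x_i^{p_i(n)} \in \text{gin}(I^n)$. I would apply the strongly stable property iteratively: replacing one factor of $x_i$ by $x_{i-1}$ yields $x_{i-1} x_i^{p_i(n)-1} \in \text{gin}(I^n)$; replacing another $x_i$ by $x_{i-1}$ yields $x_{i-1}^2 x_i^{p_i(n)-2} \in \text{gin}(I^n)$; and after $p_i(n)$ such substitutions we obtain $x_{i-1}^{p_i(n)} \in \text{gin}(I^n)$. Since $p_{i-1}(n)$ was defined to be the \emph{smallest} power of $x_{i-1}$ lying in $\text{gin}(I^n)$, this forces $p_{i-1}(n) \leq p_i(n)$. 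Chaining these inequalities over $i = 2, \dots, r$ gives the desired statement.

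There is essentially no obstacle here — the argument is a one-line application of strong stability, structurally identical to the reasoning used at the end of the proof of Lemma \ref{lem:varsingenset} and in the proof of Lemma \ref{lem:p1value}. The only thing to be mindful of is ensuring that at each substitution step the monomial still contains an $x_i$ to be replaced, which is automatic as long as the number of substitutions performed so far is strictly less than $p_i(n)$.
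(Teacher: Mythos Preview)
Your proposal is correct and follows essentially the same approach as the paper: use strong stability of $\text{gin}(I^n)$ to pass from $x_i^{p_i(n)}$ to $x_{i-1}^{p_i(n)}$, then invoke the minimality of $p_{i-1}(n)$ to conclude $p_{i-1}(n) \leq p_i(n)$. The paper's proof is just a more compressed version of what you wrote.
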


\begin{proof}
Since $\text{gin}(I^n)$ is strongly stable we can replace each $x_i$ in $x_i^{p_i(n)} \in \text{gin}(I^n)$ with $x_{i-1}$ and stay inside of $\text{gin}(I^n)$: $x_{i-1}^{p_i(n)} \in \text{gin}(I^n)$. $x_{i-1}^{p_{i-1}(n)}$ is a minimal generator of $\text{gin}(I^n)$ so $p_{i-1}(n) \leq p_i(n)$ for each $i=2, \dots, r$.  
\end{proof}

To determine the value of $p_r(n)$ we will compare the \textit{Betti numbers} of $I^n$ and $\text{gin}(I^n)$. Let
$$0 \rightarrow F_m \rightarrow \cdots \rightarrow F_1 \rightarrow F_0 \rightarrow J \rightarrow 0$$
be the unique minimal free resolution of an ideal $J$.  The graded Betti numbers of $J$, $\beta_{i,j}(J)$, are defined by $F_i = \bigoplus_j R(-j)^{\beta_{i,j}(J)}$.
A \textit{consecutive cancellation} takes a sequence $\{\beta_{i,j}\}$ to a new sequence by replacing $\beta_{i,j}$ by $\beta_{i,j}-1$ and $\beta_{i+1,j}$ by $\beta_{i+1,j}-1$.  The `Cancellation Principle' says that the graded Betti numbers $\beta_{i,j}(I^n)$ of $I^n$ can be obtained from the graded Betti numbers of $\beta_{i,j}(\text{gin}(I^n))$ by making a series of consecutive cancellations (see Corollary 1.21 of \cite{Green98}).


\begin{lem}
\label{lem:prvalue}
$p_r(n) = d_1 + \cdots + d_{r-1}+nd_r -r+1$ for all $n \geq 1$.
\end{lem}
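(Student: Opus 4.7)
Reduce via Proposition~\ref{prop:independentofambient} to the case $R = K[x_1, \ldots, x_r]$, so that $R/I^n$ is Artinian, and set $D := nd_r + d_1 + \cdots + d_{r-1}$; the claim becomes $p_r(n) = D - (r-1)$.

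For the lower bound $p_r(n) \geq D - (r-1)$, the plan is to show $\beta_{r-1, D}(I^n) \neq 0$, apply the Cancellation Principle to conclude $\beta_{r-1, D}(\text{gin}(I^n)) \neq 0$, and invoke the Eliahou--Kervaire formula for the strongly stable ideal $\text{gin}(I^n) \subset K[x_1, \ldots, x_r]$ to obtain a minimal generator of degree $D - (r-1)$ divisible by $x_r$; Lemma~\ref{lem:varsingenset} then gives $p_r(n) \geq D - (r-1)$. Since $\beta_{r-1, D}(I^n) = \beta_{r, D}(R/I^n) = \dim \text{soc}(R/I^n)_{D-r}$ via the Koszul resolution of the residue field, this reduces to showing that $D - r$ is the top nonzero graded degree of the Artinian ring $R/I^n$. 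The matching upper bound $p_r(n) \leq D - (r-1)$ drops out of the same statement: $(R/I^n)_k = 0$ for $k > D - r$ forces, by Hilbert function invariance (Theorem~\ref{thm:commonproperties}(1)), $x_r^{D - (r-1)} \in \text{gin}(I^n)$.

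To compute the top nonzero degree of $R/I^n$, I will filter by the powers of $I$: the successive quotients of $R/I^n \supset I/I^n \supset \cdots \supset I^{n-1}/I^n \supset 0$ are $I^k/I^{k+1}$ for $k = 0, \ldots, n-1$. Because $f_1, \ldots, f_r$ is a regular sequence, the associated graded ring $\text{gr}_I R$ is isomorphic to the polynomial ring $(R/I)[y_1, \ldots, y_r]$ with $\deg y_i = d_i$, yielding the graded $R$-module isomorphism
\[
I^k/I^{k+1} \cong \bigoplus_{|\alpha| = k} (R/I)(-\alpha \cdot \mathbf{d}), \qquad \alpha \cdot \mathbf{d} := \alpha_1 d_1 + \cdots + \alpha_r d_r.
\]
Since the Artinian complete intersection $R/I$ has top nonzero degree $d_1 + \cdots + d_r - r$, the top nonzero degree of $I^k/I^{k+1}$ is $kd_r + d_1 + \cdots + d_r - r$, attained at $\alpha = (0, \ldots, 0, k)$. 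Maximizing over $k = 0, \ldots, n-1$ yields $(n-1) d_r + d_1 + \cdots + d_r - r = D - r$ as the top nonzero degree of $R/I^n$, completing the proof that $p_r(n) = D - (r-1)$.

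The main obstacle is the isomorphism $I^k/I^{k+1} \cong \bigoplus_{|\alpha|=k}(R/I)(-\alpha \cdot \mathbf{d})$, which is a classical consequence of the regular-sequence hypothesis via $\text{gr}_I R \cong (R/I)[y_1, \ldots, y_r]$. The remaining ingredients -- the Koszul Tor-to-socle identification and the Eliahou--Kervaire/Cancellation-Principle translation between socles, top Betti numbers, and maximum generator degrees of strongly stable ideals -- are standard.
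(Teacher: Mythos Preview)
Your proof is correct and amounts to a self-contained variant of the paper's argument. Both approaches identify the top $(r-1)$st graded Betti degree of $I^n$ and translate it, via the Cancellation Principle and Eliahou--Kervaire, into the maximal generator degree of $\mathrm{gin}(I^n)$. The paper, however, imports the value of this top Betti degree from an external reference (\cite{Guardo05}), whereas you compute it directly by reducing to $m=r$, filtering $R/I^n$ by powers of $I$, and using the associated-graded isomorphism $I^k/I^{k+1}\cong\bigoplus_{|\alpha|=k}(R/I)(-\alpha\cdot\mathbf{d})$ to read off the socle degree $D-r$. Your upper bound is also more elementary: once $(R/I^n)_k=0$ for $k>D-r$, Hilbert-function invariance alone forces $x_r^{D-(r-1)}\in\mathrm{gin}(I^n)$, whereas the paper obtains both inequalities simultaneously by matching top Betti degrees (and must check $\beta_{r-2,\,p_r(n)+(r-1)}(\mathrm{gin}(I^n))=0$ to rule out a cancellation). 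The net effect is that your route avoids the external citation at the cost of invoking the standard regular-sequence fact about $\mathrm{gr}_I R$; the paper's route is shorter once \cite{Guardo05} is granted.
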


\begin{proof}
Let $\text{gin}(I^n)$ have minimal generators $x^{J_1}, \dots, x^{J_N}$.  By a theorem of Eliahou and Kevaire (see \cite{EK90} or Theorem 1.31 of \cite{Green98}), the initial module of $p$th syzygies of $\text{gin}(I^n)$ is minimally generated by $x_{i_p} \otimes x_{i_{p-1}} \otimes \cdots \otimes x_{i_1} \otimes x^{J_j}$ where $1 \leq j \leq N$, $i_p < i_{p-1} < \cdots < i_1$, and $i_1$ is less than the index of the largest variable appearing in $J_j$.

Since $x_r^{p_r(n)}$ is a generator of $\text{gin}(I^n)$, $x_1\otimes x_2 \otimes \cdots \otimes x_{r-1}\otimes x_r^{p_r(n)}$ is in the initial module of $(r-1)$st syzygies and $\beta_{r-1,p_r(n)+(r-1)}(\text{gin}(I^n)) \neq 0$.  Further, since $\text{gin}(I^n)$ is Borel-fixed, the largest degree element in its minimal generating set is $x_r^{p_r(n)}$.  Thus, by Eliahou-Kervaire, the largest possible degree of a minimal generator of the initial module of $(r-2)$nd syzygies is $p_r(n)+(r-2)$; this means that $\beta_{r-2,p_r(n)+(r-1)}(\text{gin}(I^n))=0$.  Therefore, no consecutive cancellation can occur between $\beta_{r-2,p_r(n)+(r-1)}(\text{gin}(I^n))$ and $\beta_{r-1,p_r(n)+(r-1)}(\text{gin}(I^n))$ so, by the Cancellation Principle, $\beta_{r-1,p_r(n)+(r-1)}(I^n) \geq 1$ and $\beta_{r-1,s}(I^n) = 0$ for all $s > p_r(n)+(r-1)$.

By the main result of \cite{Guardo05}, $\beta_{r-1,d_1+ \cdots +d_{r-1}+nd_r}(I^n) \geq 1$ and $\beta_{r-1,s}(I^n) = 0$ for all $s > d_1+ \cdots +d_{r-1}+nd_r$.  Therefore, 
$$d_1+ \cdots+ d_{r-1} +nd_r = p_r(n)+r-1$$
and $p_r(n) = d_1+d_2+\cdots + d_{r-1}+nd_r-r+1$ as claimed.

\end{proof}


\section{The Limiting Polytope of the Generic Initial System of a Complete Intersection}

In this section we prove Theorem \ref{thm:limitingpolytope} describing the asymptotic behavior of the reverse lexicographic generic initial system of a complete intersection.  Throughout $I$ will be a complete intersection of type $(d_1, \dots, d_r)$ and $p_i(n)$ will be the minimal power of $x_i$ contained in $\text{gin}(I^n)$ as described in the previous section.  The following result bounding the growth of the $p_i(n)$ will be used in the proof of the main theorem.

\begin{lem}
\label{lem:limitofpi}
Suppose that $I$ is a type $(d_1, \dots, d_r)$ complete intersection in $S=K[x_1, \dots, x_r]$.  For all $i \leq r$, $$\lim_{n \rightarrow \infty} \frac{p_i(n)}{n} \leq d_i.$$
\end{lem}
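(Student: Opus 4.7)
The plan is to dominate $p_i(n)$ by the analogous invariant of a smaller sub--complete-intersection sitting inside $I$. Write $I = (f_1, \ldots, f_r)$ with $\deg f_j = d_j$ and set $I_i := (f_1, \ldots, f_i)$. Any initial segment of a regular sequence is regular, so $I_i$ is itself a complete intersection, of type $(d_1, \ldots, d_i)$. From $I_i \subset I$ we get $I_i^n \subset I^n$ for every $n \geq 1$, and since ideal inclusions are preserved under a common generic change of coordinates (take $g$ in the intersection of the two Zariski--open sets provided by Galligo's theorem) and under passing to initial ideals, this yields $\text{gin}(I_i^n) \subset \text{gin}(I^n)$.

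Next I would locate an explicit pure power of $x_i$ inside $\text{gin}(I_i^n)$. By Proposition~\ref{prop:independentofambient}, the minimal generators of $\text{gin}(I_i^n)$ coincide with those of $\text{gin}(L^n)$ for some type $(d_1, \ldots, d_i)$ complete intersection $L$ in $K[x_1, \ldots, x_i]$, so the structural results of Section~3 apply to $I_i$ with $r$ replaced by $i$. Lemma~\ref{lem:varsingenset} then says that the largest--degree minimal generator of $\text{gin}(I_i^n)$ is a pure power $x_i^{q_i(n)}$, and Lemma~\ref{lem:prvalue} evaluates it explicitly as
$$q_i(n) \;=\; d_1 + d_2 + \cdots + d_{i-1} + n d_i - i + 1.$$

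Combining the two steps, $x_i^{q_i(n)} \in \text{gin}(I_i^n) \subset \text{gin}(I^n)$, so by the very definition of $p_i(n)$ as the smallest power of $x_i$ lying in $\text{gin}(I^n)$ we conclude $p_i(n) \leq q_i(n)$. Dividing by $n$ and letting $n \to \infty$ gives
$$\limsup_{n \to \infty} \frac{p_i(n)}{n} \;\leq\; \lim_{n \to \infty} \frac{d_1 + \cdots + d_{i-1} + n d_i - i + 1}{n} \;=\; d_i,$$
which is the desired bound. As a sanity check, in the extreme cases $i = 1$ and $i = r$ the inequality tightens to equality, matching the exact formulas of Lemmas~\ref{lem:p1value} and~\ref{lem:prvalue}. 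The main item to handle carefully is the transfer of the explicit formula from Lemma~\ref{lem:prvalue}, which is phrased for an $r$-complete intersection in $K[x_1, \ldots, x_r]$, to the $i$-complete intersection $I_i$ sitting in the larger ring $S$; Proposition~\ref{prop:independentofambient} is precisely the device that legitimizes this transfer, and once that is in hand the proof is essentially a one-line comparison.
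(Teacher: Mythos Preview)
Your proof is correct and uses the same core ingredients as the paper---comparing with the sub-complete-intersection generated by $f_1,\dots,f_i$, invoking Proposition~\ref{prop:independentofambient} to pass to the smaller ring, and then reading off the top power of $x_i$ from Lemma~\ref{lem:prvalue}. The only difference is organizational: the paper packages the same comparison as an induction on $r$ (dropping one generator at a time and appealing to the inductive hypothesis), whereas you go directly to $I_i$ for each $i$, which is a mild streamlining of the same argument.
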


\begin{proof}
We will proceed by induction on $r$.  If $r=2$ Lemma \ref{lem:p1value} gives 
$$\lim_{n\rightarrow\infty} \frac{p_1(n)}{n} = \lim_{n\rightarrow\infty}  \frac{nd_1}{n} = d_1$$ 
and Lemma \ref{lem:prvalue} gives 
$$\lim_{n\rightarrow\infty} \frac{p_2(n)}{n} = \lim_{n\rightarrow\infty}  \frac{d_1+nd_2-1}{n} = d_2$$ so the result holds in this case. 

 Assume that  the statement holds for all $r \leq R$ and that $I$ is a complete intersection of type $(d_1, \dots, d_{R+1})$ in $K[x_1, \dots, x_{R+1}]$ generated by homogeneous polynomials $f_1, \dots, f_{R+1}$ where $\text{deg}(f_i) = d_i$.  Consider the ideal $J \subseteq K[x_1, \dots, x_{R+1}]$ generated by $f_1, \dots, f_R$ and note that $J \subseteq I$ implies $\text{gin}(J^{n}) \subseteq  \text{gin}(I^n)$.  If $p'_i(n)$ denotes the minimal power of $x_i$ contained in $\text{gin}(J^n)$, $p'_i(n) \geq p_i(n)$ for all $i=1, \dots, R$.  By Proposition \ref{prop:independentofambient} there exists an ideal $L \subset R[x_1, \dots, x_R]$ such that the minimal generators of $\text{gin}(L^n)$ are the same as the minimal generators of $\text{gin}(J^n)$ for all $n\geq 1$; in particular, the minimal power of $x_i$ contained in $\text{gin}(L^n)$ is equal to the minimal power $p'_i(n)$ contained in $\text{gin}(J^n)$.  The statement of the lemma holds for $L$ by the inductive assumption so
$$\lim_{n \rightarrow \infty} \frac{p_i(n)}{n} \leq \lim_{n \rightarrow \infty} \frac{p'_i(n)}{n} \leq d_i$$
for all $i \leq R$.  For $i=R+1$ we have 
$$\lim_{n \rightarrow \infty} \frac{p_{R+1}(n)}{n} = \lim_{n \rightarrow \infty} \frac{d_1 + \cdots + d_{R}+nd_{R+1}-(R+1)+1}{n} = d_{R+1}$$
by Lemma \ref{lem:prvalue}.  Therefore, the claim holds for $r=R+1$ and thus for all $r$.
\end{proof}

Suppose that $T$ is an $r$-simplex in $\mathbb{R}^r$ with vertices at the origin and at $\vec{v_1}, \vec{v_2}, \dots, \vec{v_r}$.  Recall that the volume of $T$ is equal to 
$$\mathrm{vol}(T) = \frac{\text{det}(A)}{r!}$$
where $A$ is a matrix with columns $\vec{v_1},  \dots, \vec{v_r}$ arranged so that $A$ has a positive determinant (see, for example, Chapter 5 of \cite{Lax96}).

We are now prepared to prove that the limiting polytope of the generic initial system $\mathrm{a}_{\bullet}$ of a type $(d_1, \dots, d_r)$ complete intersection in $K[x_1, \dots, x_r]$ is the closure of the complement in $\mathbb{R}^r_{\geq 0}$ of the $r$-simplex with vertices at the points $$(0, 0, \dots, 0), (d_1, 0, \dots, 0), (0, d_2, 0, \dots, 0), \dots, (0, \dots, 0, d_r).$$  As in Section \ref{sec:volmultideal}, the limiting polytope is denoted by $P$ and the closure of its complement in $\mathbb{R}^r_{\geq 0}$ is denoted by $Q$.

\begin{proof}[Proof of Theorem \ref{thm:limitingpolytope}]
First note that, since $x_i^{p_i(n)} \in \text{gin}(I^n)$ for $i=1, \dots, r$, there is a vertex of the limiting polytope lying on each coordinate axis of $\mathbb{R}^r$.  The vertex on the $x_i$ axis has nonzero coordinate $\lim_{n \rightarrow \infty} \frac{p_i(n)}{n}$ which is at most $d_i$ by Lemma \ref{lem:limitofpi}.  Under these conditions, the maximum possible volume beneath the convex limiting polytope $P$ is attained if and only if its boundary is defined by the coordinate planes and the hyperplane through the points $(d_1, 0, \dots, 0), \allowbreak (0, d_2, 0, \dots, 0), \allowbreak \dots, (0, \dots, 0, d_r)$.  In this case, $Q$ is the $r$-simplex described in the theorem and  
$$\mathrm{vol}(Q) = \frac{d_1\cdot d_2 \cdots d_r}{r!}.$$
To show that the maximum volume is attained and thus that the limiting polytope is as claimed we will compute the algebraic  volume of the graded system.  It follows from Exercise 12.3 of \cite{Eisenbud04} and the fact that $\text{length}(I^n/I^{n+1}) = \text{length}(R/I^{n+1})-\text{length}(R/I^n)$ that $\text{length}(R/I^n) = {n+r-1\choose r}d_1\cdots d_r$.  Since the lengths of $R/I^n$ and $R/\text{gin}(I^n)$ are equal, 
the volume of the generic initial system $\textrm{a}_{\bullet}$ of $I$ is 
\begin{eqnarray*}
\mathrm{vol}(\mathrm{a}_{\bullet}) &=&\limsup_{n\rightarrow \infty} \frac{r! \cdot \text{length}(R/\text{gin}(I^n))}{n^r} \\
&=& \limsup_{n \rightarrow \infty} r! \cdot {n+r-1\choose r}\frac{d_1 \cdots d_r}{n^r}\\
&=& \limsup_{n \rightarrow \infty} r! \cdot \frac{(n+r-1)(n+r-2) \cdots (n)}{r!} \frac{d_1 \cdots d_r}{n^r} \\
&=&  d_1\cdots d_r
\end{eqnarray*}
By Proposition \ref{prop:volumesequal}, $\mathrm{vol}(\mathrm{a}_{\bullet}) = \mathrm{vol}(Q)r!$ so the maximum possible geometric volume is attained.
\end{proof}

By Proposition \ref{prop:independentofambient}, if $I$ is a type $(d_1, \dots, d_r)$ complete intersection in any polynomial ring $K[x_1, \dots, x_m]$, there exists a complete intersection $L \subset K[x_1, \dots, x_r]$ of the same type such that the minimal generators of $\text{gin}(L^n)$ are the same as the minimal generators of $\text{gin}(I^n)$.  Thus, the above theorem gives the limiting polytope $P$ of \textit{any} complete intersection.  In particular, $P$ consists of the points $(\lambda_1, \dots, \lambda_m) \in \mathbb{R}^m_{\geq 0}$ such that
$$1 \leq \frac{\lambda_1}{d_1}+ \cdots + \frac{\lambda_r}{d_r}.$$
Note that there are no conditions on the last $m-r$ coordinates.
 
From Section \ref{sec:volmultideal}, knowing the limiting polytope of a graded system is enough to compute its asymptotic multiplier ideal. 

\begin{cor}
\label{cor:asympmultideal}
Let $I$ be a complete intersection of type $(d_1, \dots, d_r)$ in $K[x_1, \dots, x_m]$ and let $\mathrm{a}_{\bullet}$ denote the generic initial system of $I$; that is, $\mathrm{a}_n = \text{gin}(I^n)$.  Then the asymptotic multiplier ideal of $\mathrm{a}_{\bullet}$ is
$$
\mathcal{J}(\mathrm{a}_{\bullet}) = \{x_1^{c_1}x_2^{c_2} \cdots x_m^{c_m} | (c_1, \dots, c_m) \in \mathbb{N}^m  \textrm{ and }  \frac{(c_1+1)}{d_1} +  \cdots + \frac{(c_r+1)}{d_R} > 1\}
$$
\end{cor}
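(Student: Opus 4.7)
The plan is to apply Definition \ref{defn:asympmultideal} directly, using the explicit description of the limiting polytope produced by Theorem \ref{thm:limitingpolytope} and the remark immediately preceding the corollary. From that discussion, the limiting polytope $P$ of the generic initial system $\mathrm{a}_{\bullet}$ consists precisely of those $(\lambda_1, \dots, \lambda_m) \in \mathbb{R}^m_{\geq 0}$ satisfying $\sum_{i=1}^r \lambda_i/d_i \geq 1$, with no constraint imposed on the last $m-r$ coordinates $\lambda_{r+1}, \dots, \lambda_m$.

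Next I would invoke Definition \ref{defn:asympmultideal} with coefficient $c = 1$, which for a graded system of monomial ideals states that
$$\mathcal{J}(\mathrm{a}_{\bullet}) = \{x^{\lambda} : \lambda + \mathbf{1} \in \mathrm{Int}(P) \cap \mathbb{N}^m\}.$$
The remaining task is to unpack $\mathrm{Int}(P) \subset \mathbb{R}^m$. The polytope $P$ is cut out by the coordinate half-space conditions $\lambda_i \geq 0$ for $i = 1, \dots, m$ together with the single affine inequality $\sum_{i=1}^r \lambda_i/d_i \geq 1$. Because $\lambda + \mathbf{1}$ has every coordinate at least $1 > 0$, the coordinate-plane inequalities are automatically strict at $\lambda + \mathbf{1}$, so membership in the interior reduces to the single strict sloped inequality
$$\frac{\lambda_1 + 1}{d_1} + \cdots + \frac{\lambda_r + 1}{d_r} > 1.$$
Renaming $c_i := \lambda_i$ then yields the description of $\mathcal{J}(\mathrm{a}_{\bullet})$ claimed in the corollary.

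I anticipate no substantive obstacle, since the hard work has already been absorbed into Theorem \ref{thm:limitingpolytope} and into Howald's theorem (which underlies Definition \ref{defn:asympmultideal}). The one point that warrants attention is confirming that the sloped facet is the only active boundary of $P$ after the translation by $\mathbf{1}$, and that the unbounded directions $\lambda_{r+1}, \dots, \lambda_m$ contribute no further constraint; both are immediate from the shape of $P$ described above. The argument is therefore essentially a clean combination of the limiting polytope computation with the monomial formula for asymptotic multiplier ideals.
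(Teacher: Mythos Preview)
Your proposal is correct and follows essentially the same approach as the paper: invoke the description of the limiting polytope from Theorem \ref{thm:limitingpolytope} (and the discussion preceding the corollary) and then apply Definition \ref{defn:asympmultideal} directly. In fact you supply more detail than the paper's one-line proof, which simply notes that the hyperplane $1 = \sum_{i=1}^r x_i/d_i$ bounds the limiting polytope and cites Definition \ref{defn:asympmultideal}.
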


\begin{proof}
By the discussion above, the hyperplane $1 = \frac{x_1}{d_1} + \cdots + \frac{x_r}{d_r}$ bounds the limiting polytope of the generic initial system so, using Definition \ref{defn:asympmultideal}, $\mathcal{J}(\mathrm{a}_{\bullet})$ is as claimed. 
\end{proof}


\bibliography{Gin}
\bibliographystyle{amsalpha}
\nocite{*}

\end{document}